\newtheorem{theorem}{Theorem}[section]
\newtheorem{lemma}[theorem]{Lemma}
\newtheorem{prop}[theorem]{Proposition}
\theoremstyle{definition}
\newtheorem{definition}[theorem]{Definition}
\newtheorem{example}[theorem]{Example}
\theoremstyle{remark}
\numberwithin{equation}{section}
\newcommand\N{\mathbb{N}}
\newcommand\J{\mathbb{J}}
\newcommand\cH{\mathcal{H}}
\def\sideremark#1{\ifvmode\leavevmode\fi\vadjust{\vbox
to0pt{\vss \hbox to 0pt{\hskip\hsize\hskip1em
\vbox{\hsize2cm\tiny\raggedright\pretolerance10000
\noindent#1\hfill}\hss}\vbox to8pt{\vfil}\vss}}}
\begin{document}

\title[Hilbert-Schauder Frame Operators]{Hilbert-Schauder Frame Operators}

\author{Rui Liu}
\address{Department of Mathematics and LPMC, Nankai University, Tianjin 300071, P.R. China}
\email{ruiliu@nankai.edu.cn}
\begin{abstract}
We introduce a new concept of frame operators for Banach spaces we call a Hilbert-Schauder frame operator. This is a hybird between standard frame theory for Hilbert spaces and Schauder frame theory for Banach spaces. Most of our results involve basic structure properties of the Hilbert-Schauder frame operator. Examples of Hilbert-Schauder frames include standard Hilbert frames and classical bases of $\ell_p$ and $L^p$-spaces with $1< p \le 2$. Finally, we give a new isomorphic characterization of Hilbert spaces.
\end{abstract}

\thanks{This work was done while the author were visiting the University of Texas at Austin and Texas A\&M University, and in the Linear Analysis Workshop at Texas A\&M University which was funded by NSF Workshop Grant for which Dr. David Larson was a co-PI of that grant. This work is also supported by the NSFC grant
11001134 and 11126250, the Fundamental Research Funds for the
Central Universities, and the Tianjin
Science \& Technology Fund 20100820.}

\date{}

\subjclass[2010]{Primary 46B15, 46B28, 46B45,47B38; Secondary 47A20.}

\keywords{Hilbert spaces; Banach spaces; Hilbert-Schauder frames; Hilbert-Schauder frame operators.}

\maketitle

\section*{Introduction}
In 1946, Gabor \cite{Ga} introduced a fundamental approach to signal decomposition
in terms of elementary signals. In 1952, while addressing some difficult problems
from the theory of nonharmonic Fourier series, Duffin and Schaeffer \cite{DS} abstracted
Gabor¡¯s method to define frames for a Hilbert space. For some reason the work
of Duffin and Schaeffer was not continued until 1986 when the fundamental work
of Daubechies, Grossman and Meyer \cite{DGM} brought this all back to life, right at
the dawn of the ¡°wavelet era¡±. Today, the theory of frames in Hilbert spaces presents a central tool in
mathematics and engineering, and has developed rather rapidly in the
past decade. The motivation has come from applications to signal
analysis, as well as from applications to a wide variety of areas of
mathematics, such as operator theory
\cite{HL} and Banach space theory \cite{CHL}.

In 1991, Gr\"{o}chenig \cite{Gr} generalized Hilbert frames to Banach spaces and introduced atomic decompositions and Banach frames. Han and Larson \cite{HL} defined a Schauder frame for a Banach space to be a compression of a Schauder basis for a Banach space. In \cite{CHL}, Casazza,
Han and Larson gave and studied various definitions of frames for Banach spaces including the Schauder frame. In 2009, Casazza, Dilworth, Odell, Schlumprecht and Zs$\acute{\textrm{a}}$k \cite{CDOSZ} studied the coefficient quantization for Schauder frames in Banach spaces. In \cite{CL}, Carando and Lassalle considered the duality theory
for atomic decompositions. Concentrating
on Schauder frames independent of the associated bases,
the author \cite{Li} gave out the concepts of minimal and maximal associated bases with
respect to Schauder frames, closely connected them with the
duality theory, and extended known James' results \cite{Ja} on
unconditional bases to unconditional frames. In \cite{LZ}, the author and Zheng gave an characterization of Schauder frames which are near-Schauder bases, which generalized Holub's results \cite{Ho} from Hilbert frames to Schauder frames. In \cite{CLS}, Carando, Lassalle and Schmidberg considered the reconstruction formula for Banach frames, extended and improved some James' type duality results in \cite{CL,Li}. Recently, Larson, Han, Liu and the author \cite{HLLL} developed elements of a general dilation theory for operator-valued measures and bounded linear maps between operator algebras that are not necessarily completely-bounded, and proved main results by extending and generalizing some known results from the theory of frames and framings. In \cite{BDL}, Beanland, Freeman and the author proved that the upper and lower estimates theorems for finite dimensional decompositions of Banach spaces can be extended and modified to Schauder frames, and gave a complete characterization on duality for Schauder frames.
Recently as well, a continuous version of classical dilation theorem in \cite{HL} was given for vector bundles and Riemannian
manifolds \cite{FPWW}.

From \cite{HL,CHL}, one thing worthwhile to note is that the notion of Hilbert frame transform makes perfect
sense in more general Banach spaces, although the standard frame operator is necessarily a Hilbert space concept. Proposition 1.10 in \cite{HL} forces a similarity between a frame and its canonical dual, so there is an isomorphism between the underlying space and its dual space, and most Banach
spaces are not isomorphic to their dual spaces. Actually, the essential reason here is that the standard frame operator is invertible, which is not necessary for general Banach spaces.

In this paper, we introduce a new concept in frame theory for Banach spaces we call a Hilbert-Schauder frame. This is a hybird between standard Hilbert space frame theory and Schauder frames for Banach spaces. Most of our results involve properties of the associated Hilbert-Schauder frame operator. Examples of unconditional Hilbert-Schauder bases include classical bases of $\ell_p$ and $L^p[0,1]$ with $1< p \le 2$, while $\ell_q$ with $2<q<\infty$ has no Hilbert-Schauder frame. Finally, following the idea in \cite{HL}, we give a new isomorphic characterization of Hilbert spaces.

Throughout this paper we only consider the real case for convenience. The complex
case is more complicated, because we will need the concept of antilinear dual space, denoted by $\overline{X^*}$, to extend the notions of self-adjointness and positivity into $B(X,\overline{X^*})$.
For more information, please see \cite{Pi}.

%

\section{Preliminaries}

\begin{definition} Let $\cH$ be a Hilbert space. A sequence
$\{f_j\}_{j\in\J}$ in $\cH$ is called a \emph{(standard) Hilbert frame} of $\cH$ if
there are $0<a\le b<\infty$ such that
\begin{equation*}
a\|x\|^2\le \sum_{j\in\J} |\langle x, f_n\rangle|^2\le b\|x\|^2 \quad
\mbox{ for all } x\in\cH.
\end{equation*}
\end{definition}

For a Hilbert frame $\{f_j\}_{j\in\J}$ of $\cH$, we consider the operator
$A:\cH\rightarrow\ell_2$ with $x\mapsto \{\langle x, f_j \rangle\}_{j\in\J},$
Its joint
$A^*:\ell_2(\J)\rightarrow\cH$ with $\{a_j\}_{j\in\J}\mapsto\sum_{j\in\J} a_j f_j$
and their product
$$S=A^*A:\cH\rightarrow\cH, \quad x\mapsto\sum_{j\in\J}\langle x, f_j \rangle f_j.$$
Since
$$a\|x\|^2\le\sum_{j\in\J}|\langle x, f_j\rangle|^2
=\big\langle\sum_{j\in\J}\langle x, f_j \rangle f_j,x\big\rangle =\langle S
x,x\rangle\le b\|x\|^2.$$ $S$ is a positive and invertible operator
with $a\, \mathrm{Id}_\cH\le S\le b\, \mathrm{Id}_\cH$ and thus,
\begin{equation}\label{eq:2}x=S^{-1}Sx=\sum_{j\in\J}\langle x,f_j\rangle S^{-1}f_j.
\end{equation}
The operator $S$ is the standard Hilbert frame operator.

For the introduction to the theory of Hilbert frames we refer the
reader to \cite{Ca1} and \cite{Ch1}. We follow
\cite{CHL,HL,CDOSZ,Li,LZ} for the theory of Schauder frames
in Banach spaces.

\begin{definition}
Let $X$ be a Banach space. A sequence $\{x_j,f_j\}_{j\in\J}$ in $X\times X^*$
is called a \emph{Schauder frame} of $X$ if
\begin{equation}\label{eq:5}x=\sum_{j\in\J} \langle x, f_j\rangle
x_j \quad \mbox{ for all } x\in X.\end{equation}
\end{definition}

\section{Hilbert-Schauder Frame Operators}

\begin{definition}Let $X$ be a
separable Banach space. A bounded linear operator
$S:X\rightarrow X^*$ is called a \emph{Hilbert-Schauder frame operator}, or \emph{HSf-operator} for brevity, if there is a Schauder frame $\{x_j,f_j\}_{j\in\J}$ of $X$ such that $S(x_j)=f_j$ for all $j\in\J.$

A Schauder frame $\{x_j,f_j\}_{j\in\J}$ of $X$ is called a \emph{Hilbert-Schauder
frame}, or \emph{HS-frame} for brevity, if there is a bounded linear operator
$S:X\rightarrow X^*$ such that $S(x_j)=f_j$ for all $j\in\J$.
%
\end{definition}

\begin{definition}
Let $X$ be a Banach space and $T\in B(X,X^*)$.
We say that
\begin{enumerate}
\item[(i)] $T$ is \emph{self-adjoint} if $T^*|_X=T$;
\item[(ii)] $T$ is \emph{positive}
if $(Tx)(x)\ge 0$ for all $x\in X$.
\end{enumerate}
\end{definition}
\begin{prop}\label{pp:1}
Every HSf-operator is self-adjoint, positive,
and injective.
\end{prop}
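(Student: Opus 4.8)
The plan is to reduce everything to a single symmetric bilinear formula for $S$ and then read off all three properties from it. Let $\{x_j,f_j\}_{j\in\J}$ be a Schauder frame of $X$ with $S(x_j)=f_j$ for all $j\in\J$. First I would fix $x\in X$ and push the bounded operator $S$ through the convergent reconstruction $x=\sum_{j\in\J}\langle x,f_j\rangle x_j$ given by \eqref{eq:5}: since the partial sums converge to $x$ in $X$ and $S$ is continuous, their images converge in $X^*$, yielding
\[
Sx=\sum_{j\in\J}\langle x,f_j\rangle\,S(x_j)=\sum_{j\in\J}\langle x,f_j\rangle\,f_j
\]
with norm convergence in $X^*$. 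Evaluating at an arbitrary $y\in X$ then gives the key identity
\[
(Sx)(y)=\sum_{j\in\J}\langle x,f_j\rangle\,\langle y,f_j\rangle,
\]
which is manifestly symmetric in $x$ and $y$ and nonnegative on the diagonal.

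Self-adjointness and positivity are then immediate consequences of this formula. For positivity, setting $y=x$ gives $(Sx)(x)=\sum_{j\in\J}\langle x,f_j\rangle^2\ge 0$. For self-adjointness, I would recall that $S^*\colon X^{**}\to X^*$ satisfies, under the canonical embedding $X\hookrightarrow X^{**}$, the pairing $(S^*x)(y)=x(Sy)=(Sy)(x)$ for all $x,y\in X$; the symmetry $(Sy)(x)=(Sx)(y)$ from the displayed identity then forces $S^*x=Sx$ for every $x\in X$, that is, $S^*|_X=S$.

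For injectivity, suppose $Sx=0$. Evaluating at $x$ and invoking positivity gives $\sum_{j\in\J}\langle x,f_j\rangle^2=(Sx)(x)=0$, so $\langle x,f_j\rangle=0$ for every $j\in\J$. Substituting this back into the frame reconstruction $x=\sum_{j\in\J}\langle x,f_j\rangle x_j$ forces $x=0$, which is exactly injectivity.

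The only step demanding care — and the one I would write out in full — is the passage of $S$ through the infinite sum; this rests purely on continuity of $S$ together with convergence of the Schauder frame expansion, so no unconditionality or growth hypothesis on the frame is required, and once the displayed identity is in hand the three conclusions are formal. I expect the genuine subtlety, rather than difficulty, to lie in bookkeeping the canonical embedding $X\hookrightarrow X^{**}$ when identifying $S^*|_X$ with $S$, which is why I would spell out the chain $(S^*x)(y)=x(Sy)=(Sy)(x)$ explicitly before appealing to symmetry.
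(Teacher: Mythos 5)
Your proposal is correct and follows essentially the same route as the paper: derive the symmetric formula $(Sx)(y)=\sum_{j\in\J} f_j(x)f_j(y)$ by pushing $S$ through the frame expansion, then read off self-adjointness from symmetry, positivity from the diagonal, and injectivity from $f_j(x)=0$ for all $j$ plus the reconstruction formula. Your extra care about norm convergence of $\sum_j f_j(x)f_j$ and about the canonical embedding $X\hookrightarrow X^{**}$ only makes explicit what the paper leaves implicit.
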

\begin{proof} Let $S$ be a HSf-operator of a Banach space $X$ with the Schauder frame $\{x_j,f_j\}_{j\in\J}$.
To get that $S$ is self-adjoint, it is sufficient to
prove that $(S x)(y)=(S y)(x)$ for all $x,y\in X$ as follows
\begin{eqnarray*}
(S x)(y)&=&\Big(S \sum_j f_j(x)x_j\Big)(y)
=\Big(\sum_j f_j(x)f_j\Big)(y)
=\sum_j f_j(x) f_j(y)\\
&=&\Big(\sum f_j(y)f_j\Big)(x)
=(S y)(x)=(S^*|_X x)(y).
\end{eqnarray*}
$S$ is positive, because for all $x\in X$ we have
\begin{eqnarray}\label{eq:1}
(S x)(x)&=&\Big(S \sum_j f_j(x) x_j\Big)(x)
=\Big(\sum_j f_j(x) f_j\Big)(x)=\sum_j |f_j(x)|^2\ge0.
\end{eqnarray}
If $S(x)=0$, then $(Sx)(x)=0$. By (\ref{eq:1}), we have $0=(Sx)(x)=\sum_j |f_j(x)|^2$, that is, $f_j(x)=0$ for all $j\in\J$. Thus, $x=\sum_j f_j(x) x_j=0$. It
follows that $S$ is injective.
\end{proof}

\begin{lemma}
Let $X$ be a separable Banach space and $\{x_j,f_j\}_{j\in\J}$ be a HS-frame of $X$ with
the HSf-operator $S$. Then 
$\sum_{j\in\J}|f_j(x)|^2\le\|S\| \|x\|^2$ for all $x\in X.$
\end{lemma}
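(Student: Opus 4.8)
The plan is to reduce the statement to the norm identity that has already appeared inside the proof of Proposition \ref{pp:1}, and then apply the elementary operator-norm estimate for the duality pairing. The key observation is that the quantity $\sum_{j\in\J}|f_j(x)|^2$ is nothing but the value $(Sx)(x)$. Indeed, since $\{x_j,f_j\}_{j\in\J}$ is a Schauder frame we have $x=\sum_{j\in\J} f_j(x)x_j$ with convergence in $X$, and since $S$ is bounded, hence continuous, we may pass $S$ inside the sum to obtain $Sx=\sum_{j\in\J} f_j(x)\,S(x_j)=\sum_{j\in\J} f_j(x)f_j$ in $X^*$. Evaluating this convergent series at $x$ then gives $(Sx)(x)=\sum_{j\in\J} f_j(x)f_j(x)=\sum_{j\in\J}|f_j(x)|^2$, which is exactly the content of equation (\ref{eq:1}).

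First I would invoke this identity to rewrite the left-hand side of the claimed inequality as $(Sx)(x)$. Next, I would estimate $(Sx)(x)$ from above by the duality pairing bound $|(Sx)(x)|\le \|Sx\|_{X^*}\,\|x\|_X$, and then use boundedness of $S$ in the form $\|Sx\|_{X^*}\le \|S\|\,\|x\|_X$. Combining these yields $(Sx)(x)\le \|S\|\,\|x\|^2$. Finally, by the positivity of $S$ established in Proposition \ref{pp:1}, the quantity $(Sx)(x)$ is nonnegative, so $(Sx)(x)=|(Sx)(x)|$ and there are no sign issues; chaining the inequalities gives $\sum_{j\in\J}|f_j(x)|^2=(Sx)(x)\le \|S\|\,\|x\|^2$, as required.

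There is essentially no serious obstacle here: the whole argument is a one-line consequence of the identity (\ref{eq:1}) together with the definition of the operator norm. The only point that formally needs care is the interchange of $S$ with the infinite sum that underlies the identity, but this is legitimate precisely because the Schauder frame series converges in $X$ and $S$ is continuous, and in any case it has already been carried out in the proof of Proposition \ref{pp:1}. Thus I would keep the proof short, citing (\ref{eq:1}) for the identity and Proposition \ref{pp:1} for positivity, and present the norm estimate as the single new step.
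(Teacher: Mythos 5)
Your proposal is correct and follows exactly the paper's argument: the paper's proof likewise cites the identity $(Sx)(x)=\sum_j|f_j(x)|^2$ from equation (\ref{eq:1}) and then applies the operator-norm bound $(Sx)(x)\le\|S\|\,\|x\|^2$. Your additional remarks on the continuity of $S$ justifying the interchange with the sum, and on positivity ruling out sign issues, are sound but merely make explicit what the paper leaves implicit.
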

\begin{proof}
By (\ref{eq:1}), we have
$
(S x)(x)=\sum_j|f_j(x)|^2.
$
Thus, $\sum_{j}|f_j(x)|^2=(S x)(x)\le\|S\| \|x\|^2$ for all $x\in X$.
\end{proof}

Thus, the linear operator
$$A:X\rightarrow\ell_2(\J), \ \ x\mapsto\sum_j f_j(x) e_j$$ is
well-defined and
bounded with $\|A\|\le\sqrt{\|S\|}$. $A$ is called the
\emph{Hilbert-Schauder analysis operator}. Its
adjoint operator $A^*$ is given by
$$A^*:\ell_2(\J)\rightarrow X^*, \ \ \sum_j a_j e_j\mapsto\sum_j a_j f_j.$$
$A^*$ is called the \emph{Hilbert-Schauder pre-frame operator}.

\begin{prop}\label{pp:2}
By composing $A$ and $A^*$, we obtain that the HSf-operator $$S=A^*A.$$
Thus, the HSf-operator $S$ factors through $\ell_2(\J)$.
\end{prop}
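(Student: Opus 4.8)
The plan is to establish the operator identity $S=A^*A$ by evaluating both sides on an arbitrary $x\in X$ and checking that they produce the same functional in $X^*$. Since $S$, $A$ and $A^*$ are all bounded and linear, a pointwise verification suffices, and I would carry it out by computing each side explicitly from its definition.

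First I would confirm that the operator $A^*$ displayed above is genuinely the Banach-space adjoint of $A$ under the standard self-duality $\ell_2(\J)^*\cong\ell_2(\J)$. For $a=\sum_j a_j e_j\in\ell_2(\J)$ and $x\in X$, the defining property of the adjoint gives $(A^*a)(x)=\langle a, Ax\rangle=\sum_j a_j f_j(x)=\big(\sum_j a_j f_j\big)(x)$, so that $A^*a=\sum_j a_j f_j$, matching the stated formula and confirming that the composition $A^*A\colon X\to X^*$ makes sense.

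Next I would evaluate $A^*Ax$ for fixed $x\in X$. By the preceding Lemma the sequence $\{f_j(x)\}_{j\in\J}$ belongs to $\ell_2(\J)$, so $Ax=\sum_j f_j(x)e_j$ is well-defined, and applying the formula for $A^*$ yields $A^*Ax=\sum_j f_j(x)f_j$ in $X^*$.

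Finally I would compute $Sx$ from the Schauder frame expansion. Because $\{x_j,f_j\}_{j\in\J}$ is a Schauder frame, $x=\sum_j f_j(x)x_j$ with convergence in $X$; since $S$ is bounded and linear, I may pass it through the series to obtain $Sx=\sum_j f_j(x)S(x_j)=\sum_j f_j(x)f_j$, using $S(x_j)=f_j$. Comparing with the previous step gives $Sx=A^*Ax$ for every $x\in X$, hence $S=A^*A$ and the asserted factorization through $\ell_2(\J)$. The only delicate point is the interchange of $S$ with the infinite sum: this is justified by the continuity of $S$, which carries the convergent partial sums of $x$ in $X$ to convergent partial sums of $Sx$ in $X^*$ and simultaneously guarantees that $\sum_j f_j(x)f_j$ converges in $X^*$, matching the convergence coming from $A^*$.
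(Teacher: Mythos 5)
Your proof is correct and takes essentially the same route as the paper's: the paper's one-line argument computes $A^*A(x)=A^*\big(\sum_j f_j(x)e_j\big)=\sum_j f_j(x)f_j=S(x)$, which is exactly your calculation with the adjoint identification and the continuity-of-$S$ step left implicit. Your added care in verifying that $A^*$ is indeed the adjoint and that both series have the same partial sums (so the two limits must coincide) fills in details the paper omits, but it is the same proof.
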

\begin{proof}
For all $x\in X$, we have
\begin{eqnarray*}A^*A(x)=A^*\big(\sum_j f_j(x)
e_j\big)=\sum_j f_j(x)f_j=S(x).\end{eqnarray*}
\end{proof}

Recall that for Schauder frames, one sequence does not uniquely determine the other, because
it is a redundant system not a stable basis \cite{Li,LZ}. The following
propositions will show that for HS-frames things are different: one sequence uniquely determines the other with
respect to the HSf-operator.

\begin{prop}Let $X$ be a Banach space.
Suppose $\{x_j,f_{1,j}\}_{j\in\J}$ and $\{x_j,f_{2,j}\}_{j\in\J}$ are both HS-frames of $X$ with the HSf-operators $S_1$ and $S_2$, respectively. Then
$S_1=S_2$ and $f_{1,j}=f_{2,j}$ for all $j\in\J$.
\end{prop}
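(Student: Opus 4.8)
The plan is to exploit the self-adjointness of both HSf-operators together with the two Schauder-frame expansions, so that one and the same mixed scalar series is recognized simultaneously as $(S_1 x)(y)$ and as $(S_2 x)(y)$. Recall from Proposition~\ref{pp:1} that each $S_i$ is self-adjoint, i.e. $(S_i x)(y)=(S_i y)(x)$ for all $x,y\in X$, and that by definition $S_i x_j=f_{i,j}$.

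First I would evaluate $(S_1 x)(y)$ by expanding the test vector $y$ in the \emph{second} frame. Since $\{x_j,f_{2,j}\}_{j\in\J}$ is a Schauder frame, $y=\sum_{j} f_{2,j}(y)\,x_j$ with convergence in $X$, and since $S_1 x\in X^*$ is continuous it passes inside the sum to give $(S_1 x)(y)=\sum_{j} f_{2,j}(y)\,(S_1 x)(x_j)$. Self-adjointness of $S_1$ then collapses the inner pairing, $(S_1 x)(x_j)=(S_1 x_j)(x)=f_{1,j}(x)$, so that $(S_1 x)(y)=\sum_{j} f_{1,j}(x)\,f_{2,j}(y)$. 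Running the symmetric computation — expanding $y$ in the first frame and using self-adjointness of $S_2$ — yields $(S_2 x)(y)=\sum_{j} f_{2,j}(x)\,f_{1,j}(y)$.

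It then remains only to compare the two scalar series. Applying self-adjointness of $S_1$ once more and re-using the first identity with $x$ and $y$ interchanged, I obtain $(S_1 x)(y)=(S_1 y)(x)=\sum_{j} f_{1,j}(y)\,f_{2,j}(x)=(S_2 x)(y)$, the last equality being the second identity (scalar factors commuting). As this holds for all $x,y\in X$, we conclude $S_1=S_2$, and evaluating on the common primal vectors gives $f_{1,j}=S_1 x_j=S_2 x_j=f_{2,j}$ for every $j\in\J$.

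The real content lies in the cross-expansion: expanding the test vector in the dual system of the \emph{other} frame and then annihilating the inner bracket through self-adjointness. The one technical point to check is the interchange of the continuous functional $S_i x$ with the infinite Schauder-frame sum, which is licensed by the boundedness of $S_i$ and the convergence of the frame expansion; no use of positivity or injectivity is required.
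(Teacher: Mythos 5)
Your proposal is correct and takes essentially the same route as the paper: both proofs realize $(S_1x)(y)$ and $(S_2\cdot)(\cdot)$ as the same mixed scalar series $\sum_j f_{1,j}(\cdot)\,f_{2,j}(\cdot)$ by cross-expanding in the \emph{other} frame, and then invoke the self-adjointness guaranteed by Proposition~\ref{pp:1} to identify the two operators, after which $f_{1,j}=S_1x_j=S_2x_j=f_{2,j}$ is immediate. The only cosmetic difference is that the paper expands the vector the operator acts on (using boundedness of $S_1$, and needing self-adjointness only once, for $S_2$), whereas you expand the test vector and pass the continuous functional $S_ix$ through the sum, which costs you two extra applications of self-adjointness but is the same computation transposed.
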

\begin{proof} It is sufficient to prove that $S_1=S_2$.
For all $x,y\in X$ we have
\begin{eqnarray*}
(S_1x)(y)&=&\Big(S_1 \sum_j f_{2,j} (x) x_j\Big)(y)
=\Big(\sum_j f_{2,j}(x) f_{1,j}\Big)(y)\\
&=&\sum_j f_{2,j}(x) f_{1,j}(y)=\Big(\sum_j f_{1,j}(y) f_{2,j}\Big)(x)=\Big(S_2
\sum_j f_{1,j}(y) x_j\Big)(x)\\&=&(S_2y)(x)=(S_2^*|_X x)(y).
\end{eqnarray*}
Then, by Proposition \ref{pp:1}, we obtain that $S_1=S_2^*|_X=S_2.$
\end{proof}


\begin{prop} Let $X$ be a Banach space.
Suppose $\{x_{1,j},f_j\}_{j\in\J}$ and $\{x_{2,j},f_j\}_{j\in\J}$ are both HS-frames of $X$ with the HSf-operators $S_1$ and $S_2$, respectively. Then
$S_1=S_2$ and $x_{1,j}=x_{2,j}$ for all $j\in\J$.
%
\end{prop}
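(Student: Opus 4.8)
The plan is to first show that the two HSf-operators coincide, $S_1 = S_2$, and then to exploit the injectivity established in Proposition \ref{pp:1} to force the reconstruction sequences to agree. Unlike the preceding proposition, where the analysis vectors $f_{i,j}$ differed and the self-adjointness identity $(S_1 x)(y) = (S_2 y)(x)$ had to be invoked, here the two frames share the same functionals $\{f_j\}_{j\in\J}$, and this makes the first step almost immediate.

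For the equality $S_1 = S_2$ I would use Proposition \ref{pp:2}. The Hilbert-Schauder analysis operator $A : X \to \ell_2(\J)$, $x \mapsto \sum_j f_j(x) e_j$, depends only on the functionals $\{f_j\}_{j\in\J}$, and both frames carry the same $\{f_j\}_{j\in\J}$, so they determine the same $A$ and hence the same pre-frame operator $A^*$. Since $S_1 = A^*A = S_2$ by Proposition \ref{pp:2}, we conclude $S_1 = S_2$. Equivalently, one may argue directly: for every $x \in X$ the Schauder frame expansion $x = \sum_j f_j(x) x_{i,j}$ together with $S_i(x_{i,j}) = f_j$ yields $S_i x = \sum_j f_j(x) f_j$, an expression that does not depend on $i$; the series converges in $X^*$ because $(f_j(x))_j \in \ell_2$ by the Lemma and $A^*$ is bounded.

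Writing $S := S_1 = S_2$, the remaining step is short. By construction $S x_{1,j} = f_j = S x_{2,j}$, whence $S(x_{1,j} - x_{2,j}) = 0$ for every $j \in \J$. Since $S$ is injective by Proposition \ref{pp:1}, it follows that $x_{1,j} = x_{2,j}$ for all $j \in \J$.

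I do not expect a serious obstacle here. The essential point is the recognition that the HSf-operator $S = A^*A$ is determined by the functionals alone, so sharing $\{f_j\}_{j\in\J}$ forces $S_1 = S_2$ at once, after which the injectivity of $S$ does all the remaining work. The only routine care needed is the justification that $\sum_j f_j(x) f_j$ converges in the norm of $X^*$, which is guaranteed by the Lemma together with the boundedness of the pre-frame operator $A^*$.
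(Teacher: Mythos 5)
Your proposal is correct and follows essentially the same route as the paper: the paper's proof is exactly your ``direct'' argument, computing $S_1(x)=S_1\big(\sum_j f_j(x)x_{1,j}\big)=\sum_j f_j(x)f_j=S_2(x)$ and then invoking the injectivity from Proposition \ref{pp:1} to get $x_{1,j}=x_{2,j}$. Your alternative packaging via $S_i=A^*A$ (Proposition \ref{pp:2}) is only a cosmetic variant, since the proof of that proposition is the same computation.
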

\begin{proof} For all $x\in X$ we have
\begin{eqnarray*}
S_1(x)=S_1\Big(\sum_j f_j(x)x_{1,j}\Big)=\sum f_j(x)f_j=S_2\Big(\sum f_j(x)x_{2,j}\Big)=S_2(x).
\end{eqnarray*}
Then, by Proposition \ref{pp:1}, $S_1=S_2$ and $x_{1,j}=x_{2,j}$ for all $j\in\J$.
\end{proof}

Actually, HS-frames have a better locally duality property to essentially establish its
advantage over Schauder frames.
\begin{prop}\label{pp:7}
Let $\{x_n,f_n\}$ be a HS-frame of $X$ with a HSf-operator $S$. Then $\{f_n,x_n\}$
is a Schauder frame for the closure of \,$\mathrm{span}\{f_n\}$.
\end{prop}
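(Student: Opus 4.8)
The goal is to show that every $g$ in $Y:=\overline{\operatorname{span}}\{f_n\}\subseteq X^*$ admits the expansion $g=\sum_n g(x_n)f_n$, where each $x_n$ is regarded as a functional on $Y$ via the canonical embedding $X\hookrightarrow X^{**}$ (so that $\langle g,x_n\rangle=g(x_n)$). The plan is to establish this reconstruction first on the dense subspace $\operatorname{span}\{f_n\}$, and then to transfer it to the closure by a uniform-boundedness argument.

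The first key observation is that self-adjointness of $S$ (Proposition \ref{pp:1}) yields the symmetry relation $f_m(x_n)=f_n(x_m)$ for all $m,n$, since $(Sx_m)(x_n)=(Sx_n)(x_m)$ reads exactly $f_m(x_n)=f_n(x_m)$. Using $S=A^*A$ (Proposition \ref{pp:2}) together with $f_m=Sx_m$, I would compute for a fixed generator
$$f_m=Sx_m=A^*Ax_m=A^*\Big(\sum_n f_n(x_m)e_n\Big)=\sum_n f_n(x_m)f_n=\sum_n f_m(x_n)f_n,$$
the last equality being the symmetry relation. This is precisely the desired reconstruction for $g=f_m$, and by linearity it extends to every finite combination $g=\sum_m c_m f_m$, giving $g=\sum_n g(x_n)f_n$ on $\operatorname{span}\{f_n\}$.

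The second ingredient is the uniform boundedness of the frame partial-sum operators. Since $\{x_n,f_n\}$ is a Schauder frame, $Q_N x:=\sum_{n=1}^N f_n(x)x_n\to x$ for every $x\in X$, so $\sup_N\|Q_Nx\|<\infty$ pointwise and Banach--Steinhaus gives $K:=\sup_N\|Q_N\|<\infty$. Its adjoint $Q_N^*:X^*\rightarrow X^*$ acts by $Q_N^* g=\sum_{n=1}^N g(x_n)f_n$, with $\|Q_N^*\|=\|Q_N\|\le K$. Writing $P_N:=Q_N^*$, the computation of the previous paragraph says exactly that $P_N g\to g$ for $g\in\operatorname{span}\{f_n\}$, and each $P_N g$ lies in $\operatorname{span}\{f_n\}\subseteq Y$.

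Finally, I would extend $P_N g\to g$ to all of $Y$ by a $3\varepsilon$ estimate: given $g\in Y$ and $\varepsilon>0$, choose $h\in\operatorname{span}\{f_n\}$ with $\|g-h\|<\varepsilon$, and bound
$$\|g-P_Ng\|\le\|g-h\|+\|h-P_Nh\|+\|P_N\|\,\|h-g\|\le(1+K)\varepsilon+\|h-P_Nh\|,$$
so that letting $N\to\infty$ kills the last term and gives $\limsup_N\|g-P_Ng\|\le(1+K)\varepsilon$. As $\varepsilon$ is arbitrary, $g=\lim_N P_N g=\sum_n g(x_n)f_n$ for every $g\in Y$, which is exactly the assertion that $\{f_n,x_n\}$ is a Schauder frame for $Y$. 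The main obstacle is this passage to the closure: it rests entirely on having the uniform bound $K$ in hand and on correctly identifying $P_N$ as the adjoint $Q_N^*$ of the frame partial-sum operator. Once these two points are secured, the remainder is routine.
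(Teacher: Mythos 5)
Your proof is correct, and its first half coincides with the paper's: the identity $f_m=\sum_n f_m(x_n)f_n$ obtained from self-adjointness of $S$ (the paper derives it by expanding $S x_n$ directly rather than routing through $A^*A$, but the substance is the same symmetry relation $f_j(x_m)=f_m(x_j)$). Where you diverge is in passing from the generators to the closed span. The paper simply cites Proposition 2.8 of \cite{Li}, which asserts that the set $Y=\{f\in X^*: f=\|\cdot\|\text{-}\lim_N\sum_{j=1}^N f(x_j)f_j\}$ is a norm-closed subspace of $X^*$, and then sandwiches $\overline{\mathrm{span}}\{f_n\}$ between $\mathrm{span}\{f_n\}$ and $Y$. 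You instead prove the needed closedness from scratch: Banach--Steinhaus applied to the frame partial sums $Q_N$ gives the uniform bound $K$, the adjoints $Q_N^*$ are correctly identified as the operators $g\mapsto\sum_{n\le N}g(x_n)f_n$, and the $3\varepsilon$ estimate transfers convergence from the dense subspace to its closure. This is in effect a self-contained proof of the cited lemma (and is, in substance, how such closedness results are proved), so your argument buys independence from \cite{Li} at the cost of a little extra length; both routes are sound.
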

\begin{proof} For each $f_n$, by Proposition \ref{pp:1}, we have
\begin{eqnarray}\label{eq:6} f_n&=&S(x_n)=S(\sum_j f_j(x_n)x_j)=\sum_j (S x_j)(x_n)\cdot S(x_j)\nonumber\\
&=&\sum_j (S^* x_n)(x_j) f_j=\sum_j (S x_n)(x_j) f_j=\sum_j f_n(x_j)f_j.
\end{eqnarray}
By proposition 2.8 in \cite{Li}, the space
$$Y=\Big\{f\in X^*: f=\|\cdot\|-\lim_{n\to\infty}\sum_{j=1}^n f(x_j) f_j\Big\},$$
is a norm closed subspace of $X^*$.
Then, by (\ref{eq:6}), we get
$\overline{\text{\rm span}}(f_n:n\in\N)\subset Y$. On the other hand, it
is clear from the definition of $Y$ that
$Y\subset\overline{\text{\rm span}}(f_n:n\in\N)$. Therefore,
$Y=\overline{\textrm{span}}(f_n:n\in\N).$ Thus, $\{f_n,x_n\}$
is a Schauder frame of $\overline{\textrm{span}}(f_n:n\in\N).$
\end{proof}

However, it is
false for Schauder frames. The following example is an unconditional and
semi-normalized Schauder frame $\{x_n,f_n\}$ of $\ell_1$ for which
$\{f_n,x_n\}$
is not a Schauder frame of $\overline{\textrm{span}}(f_n:n\in\N)$.
\begin{example}\cite{Li}
Let $(e_n)$ denote the usual unit vector basis of $\ell_1$ and let
$(e_n^*)$ be the corresponding coordinate functionals, and set
${\bf 1}=(1,1,1, .\, .\, .\, )\in \ell_\infty$. Then define a
sequence $(x_n,f_n)\subset \ell_1 \times \ell_\infty$ by putting
$x_{2n-1}=x_{2n}=e_n$ for all $n\in\N$ and
$$
f_n=\left\{%
\begin{array}{ll}
    {\bf 1}, & \hbox{if $\!n=\!1$;} \\
    e_1^*-{\bf 1}, & \hbox{if $n\!=\!2$;} \\
    e_k^*-e_1^*/2^k, & \hbox{if $n\!=\!2k-1$ for $k\in\N\setminus\{1\}$;} \\
    e_1^*/2^k, & \hbox{if $n\!=\!2k$ for $k\in\N\setminus\{1\}$.} \\
\end{array}%
\right.$$
\end{example}
\noindent Actually, we have ${\bf 1}\neq \|\cdot\|-\lim_{n\to\infty}\sum_{j=1}^n {\bf 1}(x_j) f_j$.
We leave the detail to the reader.

Now we give some important examples of HS-frames.
\begin{example}
Every standard Hilbert frame operator is an example of a HSf-operator by formula (\ref{eq:2})
$$x=\sum_{j\in\J}\langle x,f_j\rangle S^{-1}f_j=\sum_{j\in\J}\langle x,S(S^{-1}f_j)\rangle S^{-1}f_j.$$
\end{example}

\begin{definition}Let $X$ be a
separable Banach space. A bounded linear operator
$S:X\rightarrow X^*$ is called a \emph{Hilbert-Schauder basis operator}, or \emph{HSb-operator} for brevity, if there is a Schauder basis $\{z_j,z_j^*\}_{j\in\J}$ of $X$ such that $S(z_j)=z_j^*$ for all $j\in\J.$

A Schauder basis $\{z_j,z_j^*\}_{j\in\J}$ of $X$ is called a \emph{Hilbert-Schauder
basis}, or \emph{HS-basis} for brevity, if there is a bounded linear operator
$S:X\rightarrow X^*$ such that $S(z_j)=z_j^*$ for all $j\in\J$.
\end{definition}

\begin{prop}\label{pp:3}
The unit vector basis of $\ell_p$ with $1\le p\le 2$ is an unconditional HS-basis.
\end{prop}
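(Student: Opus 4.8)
The plan is to exhibit the desired operator $S:\ell_p\to(\ell_p)^*$ explicitly as the formal identity map and to verify its boundedness directly. First I would recall the standard identifications: for $1\le p\le 2$ the dual space is $(\ell_p)^*=\ell_q$, where $q$ is the conjugate exponent determined by $\tfrac1p+\tfrac1q=1$ (with $q=\infty$ when $p=1$), and under this identification the coordinate functional $e_j^*$ of the unit vector basis $\{e_j\}$ corresponds precisely to the $j$-th unit vector of $\ell_q$. Thus proving the proposition reduces to producing a bounded linear operator $S:\ell_p\to\ell_q$ with $S(e_j)=e_j$ for every $j$.

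The natural candidate is the formal identity, sending a sequence $(a_j)\in\ell_p$ to the same sequence regarded as an element of $\ell_q$. With this choice the identity $S(e_j)=e_j^*$ is immediate. The essential point is the boundedness of $S$, and this is where the hypothesis $1\le p\le 2$ enters: it forces $q\ge 2$, hence $p\le 2\le q$, so that $p\le q$. For exponents satisfying $p\le q$ one has the classical nesting estimate $\|a\|_q\le\|a\|_p$ for all $a\in\ell_p$, so $S$ is well-defined and bounded with $\|S\|\le 1$. This single inequality is the crux of the argument; it is elementary, but it is exactly the feature that distinguishes the range $1\le p\le 2$ from $p>2$, where the inclusion would run the wrong way and no such bounded $S$ could exist (consistent with the remark in the Introduction that $\ell_q$ with $2<q<\infty$ carries no HS-frame).

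Finally, I would record that $\{e_j,e_j^*\}$ is a Schauder basis of $\ell_p$ (indeed a basis in the ordinary sense), and that this canonical basis is unconditional for every $1\le p<\infty$, which is a standard fact. Combining this with the bounded operator $S$ just constructed shows that the unit vector basis is an HS-basis in the sense of the preceding definition, and being unconditional, an unconditional HS-basis. I do not anticipate any genuine obstacle beyond correctly invoking the contractive inclusion $\ell_p\subseteq\ell_q$; the entire content of the statement is packaged into that one norm estimate together with the identification of $(\ell_p)^*$ with $\ell_q$.
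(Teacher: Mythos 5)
Your proposal is correct and follows essentially the same route as the paper: both define $S$ as the diagonal operator sending $e_j$ to $e_j^*$ (the formal identity $\ell_p\to\ell_q$) and deduce boundedness, with $\|S\|\le 1$, from the nesting inequality $\|a\|_q\le\|a\|_p$ valid since $p\le 2\le q$. The only cosmetic difference is that the paper handles $p=1$ with a separate remark, whereas you absorb the case $q=\infty$ into the same estimate.
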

\begin{proof}
Let $\{e_n\}$ be the unit vector basis of $\ell_p$ and $\{e_n^*\}$
be the biorthogonal functionals in the dual space, that is, the unit
vector basis of $\ell_q$ with $1/p+1/q=1.$ Then $\{e_n,e_n^*\}$ is
an unconditional Schauder basis of $\ell_p$. Moreover, since $1<p\le
2\le q<\infty$, we have
$(\sum_n |a_n|^q)^{1/q}\le(\sum_n |a_n|^p)^{1/p}$ for all scalars $\{a_n\}.$
Thus, the operator $$S:\ell_p\rightarrow\ell_q, \quad
\sum_n a_n e_n \mapsto \sum_n a_n e_n^*\quad \mbox{for all } \{a_n\}\in\ell_p$$ is well-defined and
bounded with norm $\|S\|=1.$ Clearly, $S(e_n)=e_n^*$ for all
$n\in\N.$ Thus, $\{e_n,e_n^*\}$ is a a HS-basis.

When $p=1$, the argument is similar. 
\end{proof}

\begin{prop}\label{pp:4}
The Haar basis of $L^p[0,1]$ with $1<p\le2$ is an unconditional HS-basis.
\end{prop}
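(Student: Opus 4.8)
The plan is to produce the required operator as $S=A^*A$, where $A$ is the Hilbert-Schauder analysis operator attached to the Haar system, and to deduce the boundedness of $A$ from the classical square-function estimate together with the fact that $L^q[0,1]$ has type $2$ when $q\ge 2$. First I would fix normalizations, since (unlike the abstract property of being a basis) the property of being an HS-basis depends on the chosen scaling. Write $\{h_n\}$ for the Haar system normalized in $L^p[0,1]$, so that $\|h_n\|_p=1$, and let $\{h_n^*\}\subset L^q[0,1]=(L^p[0,1])^*$ be the biorthogonal coordinate functionals, where $1/p+1/q=1$. Because the Haar functions are mutually orthogonal in $L^2$, a direct computation shows that each $h_n^*$ is a scalar multiple of the corresponding Haar function and, with the present normalization, $\{h_n^*\}$ is exactly the Haar system normalized in $L^q$, i.e. $\|h_n^*\|_q=1$. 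This pairing of normalizations (basis normalized in $L^p$, dual system normalized in $L^q$) is the exact analogue of the $\ell_p$ situation in Proposition \ref{pp:3}, and it is essential: it is what forces the scale weights appearing below to equal $1$.

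The heart of the argument is the boundedness of the pre-frame operator $A^*\colon \ell_2(\J)\to L^q[0,1]$, $\sum_n b_n e_n\mapsto \sum_n b_n h_n^*$, i.e. the estimate $\|\sum_n b_n h_n^*\|_q\le C\,(\sum_n b_n^2)^{1/2}$. Since the Haar system is an unconditional basis of $L^q[0,1]$ for $1<q<\infty$, I would pass to the randomized average over signs $(\varepsilon_n)$ and apply Khintchine's inequality in the $t$-variable to get $\|\sum_n b_n h_n^*\|_q\le C\,\mathbb{E}_\varepsilon\|\sum_n \varepsilon_n b_n h_n^*\|_q\le C'\,\big\|(\sum_n b_n^2\,|h_n^*|^2)^{1/2}\big\|_q$. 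Now, crucially, $q\ge 2$, so $q/2\ge 1$, and Minkowski's inequality in $L^{q/2}$ gives $\big\|\sum_n b_n^2\,|h_n^*|^2\big\|_{q/2}\le \sum_n b_n^2\,\big\||h_n^*|^2\big\|_{q/2}=\sum_n b_n^2\,\|h_n^*\|_q^2=\sum_n b_n^2$, using $\|h_n^*\|_q=1$. Combining the two inequalities yields the desired bound, so $A^*$ is bounded; dualizing, the analysis operator $A\colon L^p[0,1]\to\ell_2(\J)$, $f\mapsto \sum_n h_n^*(f)e_n$, is bounded with $\|A\|=\|A^*\|$. (Equivalently, this last step is exactly the type $2$ property of $L^q$, dually the cotype $2$ property of $L^p$.)

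Finally I would assemble the conclusion. With $A$ bounded, set $S:=A^*A\colon L^p[0,1]\to L^q[0,1]$; as in Proposition \ref{pp:2} this is a bounded operator, and from $A h_n=e_n$ and $A^* e_n=h_n^*$ we obtain $S(h_n)=h_n^*$ for every $n$. Thus $S$ is a bounded HSf-operator for the Haar system, and since the Haar system is an unconditional Schauder basis of $L^p[0,1]$, it is an unconditional HS-basis, as claimed. The only real obstacle is the boundedness of $A^*$, and it is precisely there that the hypothesis $p\le 2$ (hence $q\ge 2$) is indispensable: the Minkowski/type-$2$ step needs $q/2\ge 1$, which fails for $q<2$. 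This matches the fact recorded elsewhere in the paper that $\ell_q$ with $2<q<\infty$ admits no HS-frame, so one should not expect any normalization of the Haar basis to make $L^p[0,1]$ an HS-basis when $p>2$.
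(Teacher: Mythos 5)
Your proof is correct, and it reaches the paper's conclusion by a genuinely different route in the key step. Both arguments share the same skeleton: normalize the Haar system in $L^p$, observe that by orthogonality its coordinate functionals are precisely the Haar system normalized in $L^q$, and exhibit a bounded operator $S:L^p\to L^q=(L^p)^*$ matching the two. The difference is how boundedness is obtained. The paper cites the Alspach--Odell estimates twice: the lower $\ell_2$-estimate for normalized unconditional basic sequences in $L^p$ ($1<p\le 2$) to control $\big(\sum_n a_n^2\big)^{1/2}$ by $\big\|\sum_n a_n |I_n|^{-1/p}h_n\big\|_p$, and the upper $\ell_2$-estimate in $L^q$ ($2\le q<\infty$) to control $\big\|\sum_n a_n |I_n|^{-1/q}h_n\big\|_q$ by $\big(\sum_n a_n^2\big)^{1/2}$; composing the two bounds $S$ directly. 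You instead prove only the $L^q$-side upper estimate, and you prove it from scratch: unconditionality to pass to random signs, Khintchine pointwise in $t$, then Minkowski in $L^{q/2}$, which is exactly where $q\ge 2$ enters. The $L^p$-side estimate you then get for free, since your analysis operator $A$ is the Banach-space adjoint of $A^*$ under the reflexive duality $(L^q)^*=L^p$, so $\|A\|=\|A^*\|$, and finally $S=A^*A$ as in Proposition \ref{pp:2}. Your version is self-contained and makes visible that the lower $\ell_2$-estimate in $L^p$ is not an independent ingredient but simply the dual of the upper estimate in $L^q$; the paper's version is shorter given the citation, gives explicit constants, and applies verbatim to any normalized unconditional basic sequence in $L^p$, not only the Haar system. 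One small inaccuracy in a side remark only: the paper's non-existence result for $\ell_q$, $2<q<\infty$, rules out HS-frames $\{x_n,f_n\}$ with $\liminf_n\|f_n\|>0$, not all HS-frames, so you should not quote it in the unqualified form you used.
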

\begin{proof}
Let $\{h_n\}$ be the Haar system \cite{AK}, which is an unconditional basis in
$L^p[0,1]$ for $1<p<\infty$. Notice that the Haar system is not
normalized in $L^p[0,1]$ for $1<p<\infty$. To normalize them in $L^p$
one should take $h_n/\|h_n\|_p=|I_n|^{-1/p}h_n,$ where $I_n$
denotes the support of the Haar function $h_n.$ Then for
$1<p<\infty$, we have that the dual functionals associated to the
Haar system are given by
$h_n^*=\frac{1}{|I_n|}h_n, n\in\N.$
Thus, $\{h_n, h_n^*\}=\{h_n, |I_n|^{-1}h_n\}$ is an unconditional
basis system in $L^p$. By re-scaling, we have that
$\{|I_n|^{-1/p}h_n, |I_n|^{-1/q}h_n\}$ is a normalized unconditional basis
system in $L^p$ with $1/p+1/q=1$. By \cite{AO}, for $1<p<\infty$ there exist
constants $A_p,B_p$ such that, if $\{x_n\}$ is a normalized
$\lambda$-unconditional basic sequence in $L^p$, then
\begin{eqnarray*}
\lambda^{-1}\big(\sum_n|a_n|^p\big)^{1/p}\le\big\|\sum_n a_n
x_n\big\|_p\le \lambda B_p\big(\sum_n|a_n|^2\big)^{1/2}, \ \
\mbox{ if }\, 2\le p<\infty,
\end{eqnarray*}
\begin{eqnarray*}
(\lambda A_p)^{-1}\big(\sum_n|a_n|^2\big)^{1/2}\le\big\|\sum_n a_n
x_n\big\|_p\le \lambda\big(\sum_n|a_n|^p\big)^{1/p}, \ \ \mbox{
if }\, 1< p\le2.\end{eqnarray*} Thus, the operator $S$ defined by
\begin{eqnarray*}S:L^p\rightarrow L^q \mbox{ with } 1<p\le 2\le q<\infty
\mbox{ and } 1/p+1/q=1,\end{eqnarray*}
\begin{eqnarray*}
S\big(\sum_n a_n |I_n|^{-1/p}h_n\big)=\sum_n a_n |I_n|^{-1/q}h_n,
\quad \mbox{ for all } \sum_n a_n |I_n|^{-1/p}h_n \in L^p
\end{eqnarray*}
is well-defined and bounded. Clearly, we have
$S(|I_n|^{-1/p}h_n)=|I_n|^{-1/q}h_n$ for all $n\in\N.$ Thus,
$\{|I_n|^{-1/q}h_n\}$ in $L^q$ is an unconditional Hilbert-Schauder basis for
$L^p$.
\end{proof}

By Proposition \ref{pp:3} and \ref{pp:4}, $\ell_p$ $(1\le p\le 2)$ and
$L^p[0,1]$ $(1<p\le2)$ do have perfect HS-frames, normalized unconditional HS-bases $\{e_n,e_n^*\}$ with $\|e_n\|=\|e_n^*\|=1$ for all $n\in\N$, but the interesting thing is that $\ell_q$ with $2<q<\infty$ has no HS-frame $\{x_n,f_n\}$ with $\liminf_n \|f_n\|>0$.
\begin{lemma}\label{lem:1}
Let $\{x_n,f_n\}$ be a HS-frame of $X$ with the HSf-operator $S$. Then there is some $K>0$
such that $$K\cdot B_{X^*}\subset \overline{B_{X^*}\bigcap S(X)}^{w^*}.$$
\end{lemma}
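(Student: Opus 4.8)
The plan is to reformulate the desired inclusion through the bipolar theorem and reduce it to a \emph{norming} statement about the range $S(X)$. Since $S(X)$ is a linear subspace, the set $B_{X^*}\cap S(X)$ is convex and balanced, so by the bipolar theorem for the dual pairing $\langle X,X^*\rangle$ (under the topology $\sigma(X^*,X)$) its $w^*$-closure coincides with its bipolar. Consequently the statement ``$K\cdot B_{X^*}\subseteq\overline{B_{X^*}\cap S(X)}^{w^*}$ for some $K>0$'' is equivalent to the boundedness in $X$ of the pre-polar $\{x\in X:|g(x)|\le 1 \text{ for all } g\in B_{X^*}\cap S(X)\}$. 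Writing $p(x)=\sup\{|g(x)|:g\in S(X),\ \|g\|\le 1\}$, this is in turn equivalent to proving that $S(X)$ is a norming subspace of $X^*$, i.e. that there is $c>0$ with $p(x)\ge c\|x\|$ for all $x\in X$; one then takes $K=c$, since the polar of $(1/c)B_X$ is exactly $c\,B_{X^*}$.

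To produce enough norming functionals inside $S(X)$, I would first record that the partial-sum operators $P_n x=\sum_{j=1}^n f_j(x)x_j$ are uniformly bounded: each $P_n$ is bounded, $P_n x\to x$ for every $x$ by the Schauder frame property (\ref{eq:5}), and hence $\lambda:=\sup_n\|P_n\|<\infty$ by the uniform boundedness principle. Now fix $g\in B_{X^*}$ and set $g_n=\sum_{j=1}^n g(x_j)f_j$. Since $f_j=S(x_j)$, we have $g_n=S\big(\sum_{j=1}^n g(x_j)x_j\big)\in S(X)$. A direct computation gives $g_n(z)=\sum_{j=1}^n g(x_j)f_j(z)=g(P_n z)$ for every $z\in X$, whence $\|g_n\|\le\|g\|\,\|P_n\|\le\lambda$ and $g_n(x)=g(P_n x)\to g(x)$.

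It then follows that $g_n/\lambda\in B_{X^*}\cap S(X)$, so $p(x)\ge|g_n(x)|/\lambda$ for every $n$, and letting $n\to\infty$ gives $p(x)\ge|g(x)|/\lambda$. Taking the supremum over all $g\in B_{X^*}$ yields $p(x)\ge\|x\|/\lambda$, so $S(X)$ is norming with constant $c=1/\lambda$. By the reduction of the first paragraph, $\overline{B_{X^*}\cap S(X)}^{w^*}\supseteq(1/\lambda)B_{X^*}$, and $K=1/\lambda$ works. The substantive point — and the only place where more than formal manipulation is needed — is the uniform bound $\|g_n\|\le\lambda$: a term-by-term estimate of $\big\|\sum_{j=1}^n g(x_j)f_j\big\|$ is hopeless, and it is precisely the identity $g_n=g\circ P_n$ together with the uniform boundedness of the partial-sum operators that supplies the control. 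Everything else is the bipolar/norming reformulation, which is where I would begin.
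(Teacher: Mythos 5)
Your proof is correct, and its engine is the same one the paper relies on: the paper gives no self-contained argument, deferring instead to the proof of Proposition 2.4 in \cite{Li}, and that proof is precisely your construction --- uniform boundedness of the partial-sum operators $P_nx=\sum_{j=1}^n f_j(x)x_j$ (say $\lambda=\sup_n\|P_n\|<\infty$, via the uniform boundedness principle and (\ref{eq:5})), together with the observation that for $g\in B_{X^*}$ the functionals $P_n^*g=g\circ P_n=\sum_{j=1}^n g(x_j)f_j=S\big(\sum_{j=1}^n g(x_j)x_j\big)$ lie in $S(X)$, satisfy $\|P_n^*g\|\le\lambda$, and converge to $g$ pointwise on $X$. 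Where you genuinely differ is the outer layer: you route the conclusion through the bipolar theorem, reducing the inclusion $K\cdot B_{X^*}\subset\overline{B_{X^*}\cap S(X)}^{w^*}$ to the statement that $S(X)$ is norming, and then verify norming. That reduction is sound (your polar computations are all correct), but it is dispensable: pointwise convergence on $X$ is exactly $w^*$-convergence, so $P_n^*g/\lambda\in B_{X^*}\cap S(X)$ converges $w^*$ to $g/\lambda$, which already exhibits $g/\lambda$ as an element of $\overline{B_{X^*}\cap S(X)}^{w^*}$ and gives $K=1/\lambda$ with no polar calculus at all. The trade-off: your version records the (true, and occasionally useful) equivalence between the $w^*$-inclusion and the norming property of $S(X)$, which is more than the lemma needs, whereas the direct route is a three-line proof from the same two ingredients.
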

\begin{proof} It is direct by the proof of Proposition 2.4 in \cite{Li}.
\end{proof}
\begin{prop}
$\ell_q$ with $2<q<\infty$ has no Hilbert-Schauder frame $\{x_n,f_n\}$ with $\liminf_n \|f_n\|>0$.
\end{prop}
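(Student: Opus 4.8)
The plan is to exploit the factorization of the frame operator through $\ell_2$ together with Pitt's compactness theorem. Set $X=\ell_q$ with $2<q<\infty$, so that $X^*=\ell_p$ with $1/p+1/q=1$ and hence $1<p<2$. Suppose, toward a contradiction, that $\{x_n,f_n\}$ is an HS-frame of $\ell_q$ with HSf-operator $S\colon \ell_q\to\ell_p$ and $\liminf_n\|f_n\|_p>0$. By the Bessel-type lemma and Proposition \ref{pp:2} the analysis operator $A\colon\ell_q\to\ell_2$ and pre-frame operator $A^*\colon\ell_2\to\ell_p$ are bounded and satisfy $S=A^*A$; moreover, from the explicit description of the pre-frame operator one has $A^*(e_n)=f_n$ for every $n$, where $(e_n)$ denotes the unit vector basis of $\ell_2$.

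The key observation is that the intermediate space is $\ell_2$ while the target $\ell_p$ has strictly smaller index $p<2$. By Pitt's theorem (see \cite{AK}), since $2>p$, every bounded linear operator from $\ell_2$ into $\ell_p$ is compact; in particular $A^*\colon\ell_2\to\ell_p$ is compact. Equivalently, $A\colon\ell_q\to\ell_2$ is compact because $q>2$, and then $A^*$ is compact by Schauder's theorem.

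To finish, I would use that a compact operator carries weakly null sequences to norm null sequences. The unit vectors $(e_n)$ form a weakly null sequence in $\ell_2$, so compactness of $A^*$ forces $f_n=A^*(e_n)\to 0$ in the norm of $\ell_p$. Hence $\lim_n\|f_n\|_p=0$, which contradicts the hypothesis $\liminf_n\|f_n\|_p>0$. Therefore $\ell_q$ with $2<q<\infty$ admits no such HS-frame.

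The main (and essentially the only) obstacle here is conceptual rather than computational: one must recognize that every HSf-operator necessarily routes through $\ell_2$, so that the pre-frame operator lands in a space of strictly smaller index and is thereby compact. Once Pitt's theorem is invoked the remaining steps are immediate. I note that this argument in fact yields the stronger conclusion that \emph{every} HS-frame $\{x_n,f_n\}$ of $\ell_q$ with $q>2$ satisfies $\|f_n\|\to 0$, and that it does not require Lemma \ref{lem:1}.
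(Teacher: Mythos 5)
Your proof is correct, but it takes a genuinely different route from the paper's. The paper applies Pitt's theorem to the HSf-operator $S\colon\ell_q\to\ell_p$ itself, and must therefore manufacture a weakly null sequence on which $S$ has images bounded away from zero in norm: it invokes reflexivity of $\ell_q$ together with Lemma \ref{lem:1} to conclude that $S$ has dense range, then Proposition \ref{pp:7} to obtain the dual expansion $f=\sum_j f(x_j)f_j$ for all $f\in\ell_p$, from which norm convergence of the series and $\inf_k\|f_{n_k}\|>0$ force $x_{n_k}\rightharpoonup 0$; compactness of $S$ (Pitt) then gives $f_{n_k}=S(x_{n_k})\to 0$, the desired contradiction. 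You instead apply Pitt's theorem to the pre-frame operator $A^*\colon\ell_2\to\ell_p$ arising from the factorization $S=A^*A$ of Proposition \ref{pp:2}, and your weakly null sequence is simply the unit vector basis of $\ell_2$, for which $A^*(e_n)=f_n$ is immediate from the definition of $A^*$. This bypasses Lemma \ref{lem:1}, Proposition \ref{pp:7}, the reflexivity of $\ell_q$, and the subsequence extraction altogether, and it yields the stronger standalone conclusion that $\|f_n\|\to 0$ for \emph{every} HS-frame of $\ell_q$ with $q>2$ --- the whole sequence, not merely a subsequence. What the paper's longer route develops along the way (dense range of $S$ and the dual reconstruction formula in the reflexive setting) has independent structural interest, but for this particular proposition your argument is shorter, more self-contained, and strictly stronger.
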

\begin{proof}
Since $\ell_q$ is reflexive, by Lemma \ref{lem:1}, $$X^*=\overline{S(X)}^{w^*}=\overline{S(X)}^{w}=\overline{S(X)}.$$
Then, by Proposition \ref{pp:7}, we have $f=\sum_j f(x_j)f_j$ for all $f\in X^*$. By $\liminf_n \|f_n\|>0$, there
is a subsequence $\{f_{n_k}\}$ with $\inf_k \|f_{n_k}\|>0$. So it is easy to know that $\{x_{n_k}\}$ weakly converges to 0. By Pitt's theorem \cite{LT,AK}, the HSf-operator $S:\ell_q\rightarrow\ell_p$ with
$1/q+1/p=1$ and $p<q$ must be compact. Together with Proposition 7.6 in \cite{FHHSPZ}, we have
$f_{n_k}=S(x_{n_k})\rightarrow 0$, which leads to a contradiction.
\end{proof}

%
%

We give an isomorphic characterization of Hilbert spaces following the idea in \cite{HL}.

\begin{theorem}
A Banach space $X$ is isomorphic to a Hilbert space if and only if
it has a HS-frame $\{x_n\}$ with a HSf-operator $S$ such that $\{S x_n\}$ is also a
HS-frame of $\overline{[S x_n]}$.
\end{theorem}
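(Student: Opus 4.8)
The plan is to prove the two implications separately, with the converse carrying essentially all the weight.

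For the direct implication, suppose $T:X\to\cH$ is an isomorphism onto a Hilbert space with orthonormal basis $\{e_n\}$, and let $J:\cH\to\cH^*$ be the Riesz map. I would set $x_n=T^{-1}e_n$ and $f_n=T^*Je_n$, so that $x=\sum_n\langle Tx,e_n\rangle x_n$ exhibits $\{x_n,f_n\}$ as a Schauder frame whose analysis operator $A$ factors as $A=UT$, where $U:\cH\to\ell_2$ is the coordinate isometry. Thus $A$ is an isomorphism onto $\ell_2$, and by Proposition \ref{pp:2} the HSf-operator $S=A^*A$ is invertible with $Sx_n=f_n$. Since $\{f_n\}$ then spans $X^*$, we have $\overline{[Sx_n]}=\overline{[f_n]}=X^*$; and as $S^{-1}:X^*\to X=X^{**}$ is bounded with $S^{-1}f_n=x_n$, Proposition \ref{pp:7} gives that $\{f_n,x_n\}$ is a Schauder frame of $X^*$ admitting the HSf-operator $S^{-1}$. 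Hence $\{Sx_n\}$ is a HS-frame of $\overline{[Sx_n]}$, as required.

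For the converse, write $f_n=Sx_n$, set $Y=\overline{[f_n]}\subseteq X^*$, and let $A:X\to\ell_2$, $Ax=\{f_n(x)\}$, be the analysis operator of $\{x_n,f_n\}$, so that $A^*(\{a_n\})=\sum_n a_n f_n$ and $S=A^*A$ by Proposition \ref{pp:2}. The hypothesis that $\{f_n\}$ is a HS-frame of $Y$ furnishes a second analysis operator $A':Y\to\ell_2$, $A'g=\{g(x_n)\}$, which is genuinely $\ell_2$-valued and bounded precisely because of that hypothesis: the HSf-operator $S'$ of $\{f_n,x_n\}$ yields $\sum_n|g(x_n)|^2=(S'g)(g)\le\|S'\|\,\|g\|^2$. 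The Schauder-frame reconstruction from Proposition \ref{pp:7}, namely $g=\sum_n g(x_n)f_n$ for $g\in Y$, now reads $A^*A'=\mathrm{id}_Y$. Since the partial sums $\sum a_n f_n$ lie in $Y$ and $Y$ is closed, $A^*$ maps into $Y$, and $A^*A'=\mathrm{id}_Y$ forces $A^*:\ell_2\to Y$ to be surjective; consequently its adjoint $(A^*)^*:Y^*\to\ell_2$ is bounded below, and one computes $(A^*)^*\eta=\{\eta(f_n)\}_n$.

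I would finish by factoring $A$ through this bounded-below adjoint. Let $R:X\to Y^*$ be the canonical restriction $R(x)(g)=g(x)$; then $(A^*)^*Rx=\{(Rx)(f_n)\}_n=\{f_n(x)\}_n=Ax$, i.e. $A=(A^*)^*R$. The crux is that $R$ is bounded below, equivalently that $Y$ is norming for $X$, and this is exactly where Lemma \ref{lem:1} enters: since $S(X)\subseteq Y$, we get $K\,B_{X^*}\subseteq\overline{B_{X^*}\cap S(X)}^{w^*}\subseteq\overline{B_Y}^{w^*}$, whence $\sup_{g\in B_Y}|g(x)|\ge K\|x\|$ and $\|Rx\|_{Y^*}\ge K\|x\|$. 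Combining the two lower bounds gives $\|Ax\|\ge c\,K\|x\|$, so $A$ is an isomorphism onto a closed subspace of $\ell_2$, and $X$ is isomorphic to a Hilbert space. The main obstacle is precisely this lower estimate for $A$: neither $A$ nor the synthesis map of $\{x_n\}$ is controlled a priori, and the hypothesis must be used twice — once to make $A'$ an $\ell_2$-valued operator (hence $A^*$ surjective onto $Y$ and $(A^*)^*$ bounded below), and again, through Lemma \ref{lem:1}, to make $Y$ norming so that $R$, and therefore $A$, is bounded below.
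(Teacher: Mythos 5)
Your argument is correct and rests on the same two pillars as the paper's own proof --- the norming property of $Y=\overline{[Sx_n]}$ (your Lemma \ref{lem:1}, the paper's appeal to Proposition 2.4 of \cite{Li}) and a factorization that transfers the resulting lower bound to the analysis operator $A$ --- but your factorization runs in the opposite direction. The paper uses the second frame's HSf-operator $T$ to prove the identity $(TSx)(f)=f(x)$ for all $f\in Y$, i.e. $TS=R$ in your notation; hence $R=(TA^*)A$ factors \emph{through} $A$, and the norming lower bound on $R$ passes to $A$ by the single inequality $\|Ax\|\ge\|TA^*\|^{-1}\|TSx\|$, with no open mapping theorem. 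You instead use the identity $A=(A^*)^*R$, which holds for \emph{any} HS-frame, and spend the hypothesis on making $A^*:\ell_2\to Y$ surjective (via $A^*A'=\mathrm{id}_Y$), so that $(A^*)^*$ is bounded below by duality. Your route is a step longer but isolates what the hypothesis really buys: surjectivity of the pre-frame operator onto $Y$. You also prove necessity honestly (the paper merely asserts it), and your construction via $S^{-1}$ and Proposition \ref{pp:7} is correct.

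One caveat: you read the hypothesis as saying that the dual functionals of $\{Sx_n\}$ in $Y$ are the restrictions $x_n|_Y$, i.e. that the second HSf-operator $S'$ satisfies $S'f_n=x_n|_Y$. The paper's reading (and its proof) allows the second frame to be $\{f_n,Tf_n\}$ with whatever dual functionals the operator $T$ produces, and these need not coincide with $x_n|_Y$: the paper's uniqueness proposition does not apply, because $\{f_n,x_n|_Y\}$ is only known to be a Schauder frame of $Y$ (by Proposition \ref{pp:7}), not an HS-frame. So as written, your sufficiency argument proves the theorem under a formally stronger hypothesis. This is not fatal: your proof goes through verbatim upon replacing $A'g=\{g(x_n)\}$ by $A''g=\{(Tf_n)(g)\}$, with boundedness of $A''$ coming from the Bessel-type lemma applied to the HS-frame $\{f_n,Tf_n\}$ of $Y$, and the identity $A^*A''=\mathrm{id}_Y$ coming from that frame's own reconstruction formula $g=\sum_n (Tf_n)(g)\,f_n$ rather than from Proposition \ref{pp:7}. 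With that substitution your proof covers the theorem exactly as stated.
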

\begin{proof}
The necessity is easy by standard Hilbert frame theory.

For sufficiency, assume that $(\{S x_n\},T)$ is a HSf-system of
$\overline{[S x_n]}$, then for all $x\in X$ and $f\in \overline{[S
x_n]}$,
\begin{eqnarray*}
(TS x)(f)&=&(TS\sum_n (S x_n)(x)\cdot x_n)(f)=(\sum_n (S
x_n)(x)\cdot T S x_n)(f)\\&=&\sum_n (S x_n)(x)\cdot (T S
x_n)(f)=\sum_n (T S x_n)(f)\cdot (S x_n)(x)\\&=&(\sum_n (T S
x_n)(f)\cdot S x_n)(x)=f(x).
\end{eqnarray*}
By Proposition 2.4 in \cite{Li}, $\overline{[S x_n]}$ is a norming subspace of
$X^*$, it follows that $$\|TSx\|_{\overline{[S x_n]}^*}
=\sup_{f\in \overline{[S x_n]}, \|f\|\le 1}|(TSx)(f)|
=\sup_{f\in \overline{[S x_n]}, \|f\|\le 1}|f(x)|
\approx\|x\|_X.$$ Thus, $TS$ is an isomorphic embedding. By Proposition
\ref{pp:2}, we have $S=A^*A$. Then $\|A x\|\ge \|TA^*\|^{-1}\|TS x\|
\ge K \|x\|$ for some $K>0$, that is, $A:X\rightarrow\ell_2$ is an
isomorphic embedding, which implies that $X$ is isomorphic to a
Hilbert space.
\end{proof}


%
%
%
%
%
\vskip1.5cm
\noindent\textbf{Acknowledgment.}
This work was completed when I visited the University of Texas at Austin and Texas A\&M University.
I would like to thank David Larson and E. Odell for the invitation and great
help. 

The author also expresses his appreciation to Dr. David Larson
for many very helpful comments regarding frame theory and operator algebra.
The interested readers should consult the papers \cite{CHL,HL,HLLL}.

\vskip1.5cm


\begin{thebibliography}{1}

\bibitem[AK]{AK} F. Albiac and N.J. Kalton, Topics in Banach space theory, Springer, 2006.

\bibitem[AO]{AO} D. Alspach and E. Odell, $L_p$ spaces, Handbook of the geometry of Banach spaces, Vol. I, 123-159, edited by W. B. Johnson and J. Lindenstrauss, North-Holland Publishing Co., Amsterdam, 2001.

\bibitem[BDL]{BDL} K. Beanland, D. Freeman and R. Liu, Upper and lower estimates for Schauder frames and atomic decompositions, preprint, arXiv:1202.2492v1.

\bibitem[CL]{CL} D. Carando and S. Lassalle, Duality, reflexivity and atomic decompositions in Banach
spaces, Studia Math. 191 (2009), 67-80.

\bibitem[CLS]{CLS} D. Carando, S. Lassalle and P. Schmidberg, The reconstruction formula for Banach frames and duality,
J. Approx. Theory, 163 (2011), 640-651.

\bibitem[Ca]{Ca1} P.G. Casazza, The art of frame theory, Taiwanese J.
Math. 4 (2) (2000) 129-201.

\bibitem[CDOSZ]{CDOSZ} P.G. Casazza, S.J. Dilworth, E. Odell, Th. Schlumprecht and A. Zsak,
Coefficient Quantization for Frames in Banach Spaces, J. Math. Anal.
Appl. 348 (2008) 66-86.

\bibitem[CHL]{CHL} P.G. Casazza, D. Han and D.R. Larson,
Frames for Banach spaces, Contemp. Math. 247 (1999) 149-182.

\bibitem[Ch]{Ch1} O. Christensen, An introduction to frames and Riesz
bases, Birkh$\ddot{\mathrm{a}}$user, 2003.

\bibitem[DGM]{DGM} I Daubechies, A. Grossmann and Y. Meyer, Painless nonorthogonal expansions, J. Math.
Physics, 27 (1986), 1271-1283.

\bibitem[DS]{DS} R.J. Duffin and A.C. Schaeffer, A class of nonharmonic Fourier series,
Tran. Amer. Math. Soc., 72 (1952), 341-366.

\bibitem[FHHSPZ]{FHHSPZ} M. Fabian, P. Habala, P. H\'{a}jek, V.M. Santaluc\'{\i}a,
J. Pelant, V. Zizler, Functional analysis and infinite-dimensional geometry, CMS books in mathematics, Springer, 2001.

\bibitem[FPWW]{FPWW} D. Freeman, D. Poore, A. R. Wei, and M. Wyse, Moving Parseval frames for vector
bundles, preprint.

\bibitem[Ga]{Ga} D. Gabor, Theory of communications, Jour. Inst. Elec. Eng., 93 (1946), 429-457.

\bibitem[Gr]{Gr} K. Gr\"{o}chenig, Describing functions: atomic decompositions versus frames,
Monatsh. Math., 112 (1991), no.1, 1-42.

\bibitem[HL]{HL} D. Han and D.R. Larson, Frames, bases and group
representations, Memoirs Amer. Math. Soc., 147 (697) (2000), x+94 pp.

\bibitem[HLLL]{HLLL} D. Han, D.R. Larson, B. Liu and R. Liu, Operator-Valued Measures, Dilations, and the Theory of
Frames, preprint, arXiv:1110.5833v1.

\bibitem[Ho]{Ho} J. Holub, Pre-frame operators, Besselian frame, and near-Riesz
bases in Hilbert spaces, Proc. Amer. Math. Soc., 122(3)
(1994), 779-785.

\bibitem[Ja]{Ja} R.C. James, Bases and reflexivity of Banach spaces, Ann. of Math. 52 (1950), 518--527.

\bibitem[LT]{LT} J. Lindenstrauss and L. Tzafriri, Classical Banach spaces I and II, Springer, 1996.

\bibitem[Li]{Li} R. Liu, On shrinking and boundedly complete Schauder frames of Banach spaces,
J. Math. Anal. Appl., 365 (2010) 385-398.

\bibitem[LZ]{LZ} R. Liu and B. Zheng, A characterization of Schauder frames which are near-Schauder
bases, J. Fourier Anal. Appl., 16 (2010), no.5, 791-803.

\bibitem[Pi]{Pi} G. Pisier, Similarity problens abd completely bounded maps, Lecture Notes in Mathematics,
Springer, 1995.


\end{thebibliography}
\end{document}